\documentclass[11pt,a4paper]{article}
\usepackage[margin=2.45cm]{geometry}
\usepackage{amsmath,amssymb,amsthm,mathtools,bm}
\usepackage{booktabs,array}
\usepackage{hyperref}
\usepackage{microtype}
\usepackage[T1]{fontenc}
\usepackage{lmodern}
\hypersetup{colorlinks=true,linkcolor=black,citecolor=black,urlcolor=blue}
\allowdisplaybreaks[3]
\newtheorem{theorem}{Theorem}[section]
\newtheorem{lemma}[theorem]{Lemma}
\newtheorem{proposition}[theorem]{Proposition}
\newtheorem{problem}[theorem]{Problem}

\newcommand{\Tr}{\operatorname{Tr}}
\newcommand{\rank}{\operatorname{rank}}
\newcommand{\supp}{\operatorname{supp}}
\newcommand{\lin}{\mathrm{lin}}
\newcommand{\conc}{\mathrm{conc}}
\newcommand{\capc}{\mathrm{cap}}
\newcommand{\Mn}{M_n(\mathbb C)}

\title{\bfseries Sharp Concave-Function Transfer for Lee-Type Schatten Norm Inequalities}
\author{Xing Li\\[0.35em]
\small Center for Control Theory and Guidance Technology, Harbin Institute of Technology\\
\small Harbin 150001, China}
\date{}

\begin{document}
\maketitle

\begin{abstract}
We compare the sharp constants in Lee-type Schatten norm inequalities for finite families of complex matrices with the corresponding constants obtained after applying an arbitrary nonnegative concave function to the relevant absolute values. We prove that the nonlinear problem has exactly the same best constant as the underlying linear problem for every finite matrix dimension, every number of summands, and every Schatten exponent, including the operator-norm endpoint. For finite exponents, the proof proceeds through a lossless single-cap reduction, an exact positive-mixture representation for finite cap combinations, and a noncommutative reassembly argument based on a weighted Schatten contraction and the Araki--Lieb--Thirring inequality. Finite-spectrum interpolation and a perturbation argument at zero then yield the general concave case. Thus every sharp linear result transfers without loss to the full class of nonnegative concave functions.
\end{abstract}

\noindent\textbf{Keywords:} Lee-type inequalities; Schatten norms; nonnegative concave functions; cap functions; weighted noncommutative $L_p$; Araki--Lieb--Thirring; sharp constants.

\section{The problem and the main result}

If $A,B\ge0$ and $f:[0,\infty)\to[0,\infty)$ is concave, Bourin--Uchiyama~\cite{BU} proved that, for every unitarily invariant norm,
\[
 |||f(A+B)|||\le |||f(A)+f(B)|||.
\]
Bourin~\cite[Theorem~2.1]{Bourin2010} extended this phenomenon to normal matrices: if $A$ and $B$ are normal, then
\[
 |||f(|A+B|)|||\le |||f(|A|)+f(|B|)|||
\]
for every symmetric norm. He further observed that the same argument applies to arbitrary finite families of normal matrices. Thus, for normal summands, the concave-function problem already admits the constant $1$.

For unrestricted matrices the situation is different. Lee~\cite{Lee2010,Lee2012} subsequently studied the corresponding absolute-value form for general matrices and raised the problem of determining the best constants for Schatten norms. In the two-matrix Frobenius case, the best constant
\[
 \sqrt{\frac{1+\sqrt2}{2}}
\]
was proved by Lin--Zhang~\cite{LinZhang}; Zhang~\cite{Zhang2025} later gave another proof. Tang--Zhang~\cite{TangZhang} extended the linear problem to $m$ matrices and obtained the dimension-free sharp Frobenius constant
\[
 \sqrt{\frac{1+\sqrt m}{2}},
\]
while also studying general Schatten exponents.

Tang--Zhang~\cite[Proposition~3.2]{TangZhang} also proved that if $f$ is nonnegative, concave, and geometrically convex, then for $1\le p\le\infty$,
\[
 \left\|f\left(\left|\sum_{j=1}^mA_j\right|\right)\right\|_p
 \le m^{\frac12-\frac1{2p}}
 \left\|\sum_{j=1}^mf(|A_j|)\right\|_p.
\]
We remove the geometric-convexity assumption and determine the corresponding best constant.

For $1\le p<\infty$, fix $m,n\ge1$ and define
\begin{equation}\label{eq:clin}
 c^{\lin}_{p,m,n}
 :=\sup_{A_1,\dots,A_m\in\Mn}
 \frac{\left\|\sum_{j=1}^mA_j\right\|_p}
 {\left\|\sum_{j=1}^m|A_j|\right\|_p},
\end{equation}
where the denominator is required to be nonzero. Define further
\begin{equation}\label{eq:cconc}
 c^{\conc}_{p,m,n}
 :=\sup_{\substack{f:[0,\infty)\to[0,\infty)\ \text{concave}\\
 A_1,\dots,A_m\in\Mn}}
 \frac{\left\|f\left(\left|\sum_{j=1}^mA_j\right|\right)\right\|_p}
 {\left\|\sum_{j=1}^mf(|A_j|)\right\|_p},
\end{equation}
where cases with zero denominator are excluded from the supremum. The same definitions are used for $p=\infty$. Taking $f(t)=t$ gives
$c^{\conc}_{p,m,n}\ge c^{\lin}_{p,m,n}$.

\begin{theorem}\label{thm:main}
For every $m,n\ge1$ and $1\le p\le\infty$,
\begin{equation}\label{eq:main}
 c^{\conc}_{p,m,n}=c^{\lin}_{p,m,n}.
\end{equation}
Equivalently, for every nonnegative concave function $f$ and every $A_1,\dots,A_m\in\Mn$,
\begin{equation}\label{eq:mainineq}
 \left\|f\left(\left|\sum_{j=1}^mA_j\right|\right)\right\|_p
 \le c^{\lin}_{p,m,n}
 \left\|\sum_{j=1}^mf(|A_j|)\right\|_p.
\end{equation}
The constant is best possible.
\end{theorem}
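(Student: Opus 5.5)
The lower bound $c^{\conc}_{p,m,n}\ge c^{\lin}_{p,m,n}$ has already been noted, via $f(t)=t$. So the task is to prove \eqref{eq:mainineq} with constant $c:=c^{\lin}_{p,m,n}$. I will first treat $1\le p<\infty$ and then obtain $p=\infty$ as a limit: since $c^{\lin}_{p,m,n}\to c^{\lin}_{\infty,m,n}$ in fixed dimension $n$, compactness and continuity of norms should give this. If that limit is awkward, I will instead run the same argument directly at $p=\infty$.

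\textbf{Step 1: reduction to finite cap combinations.} Only the finitely many spectral values of $|\sum A_j|$ and of the $|A_j|$ matter. On this finite set, every nonnegative concave $f$ agrees with a function of the form
\[
g(t)=\alpha+\beta t+\sum_k \gamma_k\min(t,s_k),\qquad \alpha,\beta,\gamma_k\ge0 .
\]
This is the piecewise-linear concave interpolant, which is nondecreasing after discarding a possibly decreasing tail; a decreasing tail will need separate handling, presumably by perturbing $f$. The value $f(0)$ requires care because $|A_j|$ may have kernel. I handle it by replacing $f$ with $f+\varepsilon$ and letting $\varepsilon\to0$. Hence it suffices to prove the inequality for such $g$. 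The constant term $\alpha$ and the linear term $\beta t$ are harmless, since they are themselves cap-type endpoints.

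\textbf{Step 2: single cap.} Next I prove the inequality for $f_s(t)=\min(t,s)$. Set $S=\sum_j A_j$ and write the polar decomposition $A_j=U_j|A_j|$. Then $\min(|A_j|,s)=|A_j|\,h_s(|A_j|)$ with $0\le h_s\le1$. Define contracted matrices $A_j'=U_j\min(|A_j|,s)$. Then $|A_j'|=f_s(|A_j|)$, so the linear inequality gives $\|\sum A_j'\|_p\le c\|\sum f_s(|A_j|)\|_p$. It remains to show $\|f_s(|S|)\|_p\le\|\sum A_j'\|_p$. This is not true pointwise. I would therefore use a unitarily invariant argument, choosing the $A_j'$ more cleverly: write $f_s(|S|)=|S|\,h_s(|S|)$ and compare with $S\,h_s(|S|)=\sum_j A_j h_s(|S|)$. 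Each term $A_jh_s(|S|)$ has absolute value controlled, after a weighted Schatten contraction, by a concave image of $|A_j|$. Araki--Lieb--Thirring, $\|(BAB)^r\|\le\|B^rA^rB^r\|$, should allow the contraction $h_s(|S|)$ to be moved onto the $|A_j|$ side.

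\textbf{Step 3: reassembly.} For a positive combination $g=\sum_k\gamma_k f_{s_k}$, I write $g(|S|)$ as a positive mixture of the single-cap pieces. Then I combine the matrices $A_j^{(k)}$ from Step 2 into one family $\tilde A_j=\sum_k\gamma_k A_j^{(k)}$ with $|\tilde A_j|\le g(|A_j|)$, and apply the linear constant once to this family. This avoids summing the constants, which a triangle-inequality approach would do and would lose sharpness.

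I expect Steps 2 and 3 to be the main obstacle, specifically finding a common right factor so that the $|\tilde A_j|$ are genuinely dominated by $g(|A_j|)$ in a form compatible with the Schatten norm. The pointwise operator inequality fails for non-normal $A_j$, so the comparison must be done in norm via Araki--Lieb--Thirring. My first attempt would be $\tilde A_j=A_j\,\phi(|S|)$ with $\phi(t)=g(t)/t$, followed by checking $\|\sum|\tilde A_j|\|_p\le\|\sum g(|A_j|)\|_p$ through weighted $L_p$ duality.
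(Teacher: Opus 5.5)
Your proposal has a genuine gap at its core. Step~2 contains no argument that works, and Step~3 reuses the same construction. Both concrete comparisons you propose are false, already for $1\times1$ matrices, and the obstruction is cancellation in $S=\sum_jA_j$. Take $n=1$, $m=2$, $A_1=2s$, $A_2=-s$, so that $|S|=s$ and $\min(|S|,s)=s$.

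\begin{itemize}
\item Your first family gives $\sum_jU_j\min(|A_j|,s)=s-s=0$. Hence $\|\min(|S|,s)\|_p\le\|\sum_jA_j'\|_p$ fails in norm, not merely pointwise.
\item Your final candidate $\tilde A_j=A_j\phi(|S|)$, with $\phi(t)=\min(t,s)/t$, has $\phi(|S|)=1$. Hence $\sum_j|\tilde A_j|=3s$, while $\sum_j\min(|A_j|,s)=2s$. The inequality $\|\sum_j|\tilde A_j|\|_p\le\|\sum_jg(|A_j|)\|_p$ that you intend to verify by weighted duality is therefore false, and no Araki--Lieb--Thirring argument can establish it.
\end{itemize}

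Structurally, even for commuting matrices the domination $|A_j|\phi(|S|)\le g(|A_j|)$ amounts to $g(|S|)/|S|\le g(|A_j|)/|A_j|$. Since $g(t)/t$ is nonincreasing, this essentially requires $|S|$ to be at least as large as $|A_j|$, which is exactly what cancellation destroys. Any right factor depending only on $|S|$ runs into this. Likewise, your plan of building a single family $\tilde A_j$ and applying $c$ once has no construction behind it. The linear decomposition $g=\sum_k\gamma_k\min(\cdot,s_k)$ cannot simply be pushed through $\|\cdot\|_p$ without triangle-inequality loss.

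The missing idea for a single cap is to keep the large singular values away from the linear inequality altogether. Split $\min(P_j,r)=rE_j+L_j$ with $E_j=\mathbf 1_{(r,\infty)}(P_j)$ and $L_j=P_j(I-E_j)$. Then $S-\sum_jU_jL_j=\sum_jU_jP_jE_j$ has rank at most $K=\sum_j\rank E_j$, so Ky Fan interlacing gives $\|\min(|S|,r)\|_p^p\le Kr^p+\|\sum_jU_jL_j\|_p^p$. The linear constant is applied only to the family $U_jL_j$, whose absolute values are $L_j$. McCarthy's inequality gives $\|\sum_j\min(P_j,r)\|_p^p\ge Kr^p+\|\sum_jL_j\|_p^p$, which closes the argument because $c\ge1$.

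For finite cap combinations $q$, the mixture must be taken at the level of $p$th powers: $q(t)^p=\int\min(t,u)^p\,d\nu(u)$ with $\nu((t,\infty))=q'_+(t)\,(q(t)/t)^{p-1}$. One applies the single-cap bound separately for each $u$. One then proves $\int\|\sum_j\min(P_j,u)\|_p^p\,d\nu(u)\le\|\sum_jq(P_j)\|_p^p$ using three tools: the unital positive map $(Y_j)_j\mapsto\sum_jV_j^*Y_jV_j$ with $V_j=q(P_j)^{1/2}G^{-1/2}$ and $G=\sum_jq(P_j)$; a weighted $L_p$ contraction with weight $G^p$; and Araki--Lieb--Thirring.

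Two smaller points. First, a nonnegative concave function on $[0,\infty)$ is automatically nondecreasing, so there is no decreasing tail to handle. Second, replacing $f$ by $f+\varepsilon$ does not address $f(0)>0$. Your constant term $\alpha$ is not a cap endpoint at $t=0$, since $\min(t,s)/s\to\mathbf 1_{\{t>0\}}$. The paper instead makes $f$ affine on $[0,\delta]$ and perturbs the $A_j$ so that every $A_j$ and $\sum_jA_j$ is invertible; then only positive eigenvalues need to be interpolated by caps vanishing at $0$. Your passage to $p=\infty$ by letting $p\to\infty$ is the same as the paper's and is fine.
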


Thus the full Lee problem for nonnegative concave functions is reduced to the corresponding linear best-constant problem. Recent results at the linear level show a marked difference between $1<p<2$ and $p\ge2$: Zeng--Liu--Ratnavelu~\cite{ZLR} gave an explicit $2\times2$ rank-one counterexample at $p=3/2$; Qiu~\cite{Qiu} proved that the Tang--Zhang two-matrix candidate formula fails for every $1<p<2$ and determined the sharp two-matrix constants for all $2\le p<\infty$; Pang~\cite{Pang2026} extended the failure of the candidate formula to every $m\ge2$ and every $1<p<2$. Teng Zhang~\cite{TengZhang2026} obtained several exact values and estimates for the fixed-dimensional linear constants. Combined with the present result, these works immediately yield the corresponding statements for the full class of nonnegative concave functions.

To the best of the author's knowledge, the identity \eqref{eq:main} for every fixed $p,m,n$ has not previously appeared in the literature.

The proof is organized around three ingredients. First, we show that passing from the identity function to a single cap is lossless. Second, positive finite combinations of caps admit an exact positive-mixture representation after taking the $p$th power. Third, a weighted Schatten contraction and the Araki--Lieb--Thirring inequality reassemble these scalar cap decompositions after the matrices are summed. The next section develops these ingredients, and Section~3 combines them with finite-spectrum interpolation and a perturbation argument at zero to prove Theorem~\ref{thm:main}; the operator-norm endpoint is then obtained by passage to the limit.

Throughout, $M_n(\mathbb C)$ denotes the complex $n\times n$ matrices, $|A|=(A^*A)^{1/2}$, and $\|A\|_p=(\Tr|A|^p)^{1/p}$ for $1\le p<\infty$, while $\|A\|_\infty$ is the operator norm. We write $\Tr$, $\rank$, and $\supp$ for the trace, rank, and support projection, respectively; spectral functions and spectral projections such as $\mathbf1_E(P)$ are understood through the usual functional calculus. For a linear map $\Phi$ between finite-dimensional matrix $C^*$-algebras, $\Phi_*$ denotes the trace adjoint, determined by $\Tr[\sigma\Phi(X)]=\Tr[\Phi_*(\sigma)X]$.

\section{Ingredients for the transfer argument}

\subsection{Single-cap transfer}

Let $h_r(t):=\min\{t,r\}$, where $r>0$. For $1\le p<\infty$ define the single-cap best constant
\[
 c^{\capc}_{p,m,n}(r)
 :=\sup_{A_1,\dots,A_m}
 \frac{\left\|h_r\left(\left|\sum_jA_j\right|\right)\right\|_p}
 {\left\|\sum_jh_r(|A_j|)\right\|_p}.
\]
We prove in this section that this constant equals the linear constant. The key ingredient is the following low-rank perturbation inequality.

\begin{lemma}\label{lem:lowrank}
Let $S,L\in\Mn$, $r>0$, $1\le p<\infty$, and suppose that $\rank(S-L)\le K$. Then
\begin{equation}\label{eq:lowrank}
 \|h_r(|S|)\|_p^p\le Kr^p+\|L\|_p^p.
\end{equation}
\end{lemma}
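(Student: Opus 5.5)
We work entirely with singular values. Let $s_1(S)\ge s_2(S)\ge\cdots\ge s_n(S)$ be the singular values of $S$, and similarly for $L$. Since $h_r(|S|)$ is a spectral function of $|S|$, its eigenvalues are $\min\{s_i(S),r\}$. Hence
\[
 \|h_r(|S|)\|_p^p=\sum_{i=1}^n \min\{s_i(S),r\}^p .
\]
We split this sum at index $K$ and bound the two pieces separately.

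For the first $K$ indices we use only the cap: each term is at most $r^p$, so
\[
 \sum_{i\le K}\min\{s_i(S),r\}^p\le Kr^p .
\]
If $K\ge n$ there are no remaining terms and we are done. We may also assume $K$ is an integer, since $\rank(S-L)$ is.

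For the remaining indices we discard the cap, using $\min\{s_i(S),r\}\le s_i(S)$. We then compare with $L$ via the Weyl interlacing inequality for singular values:
\[
 s_{i+j-1}(X+Y)\le s_i(X)+s_j(Y).
\]
Write $S=L+R$ with $\rank R\le K$, so that $s_{K+1}(R)=0$. Taking $X=L$, $Y=R$, $j=K+1$ gives, for all $i$ with $i+K\le n$,
\[
 s_{i+K}(S)\le s_i(L).
\]
This is the only genuinely matrix-theoretic input: a rank-$K$ perturbation can shift singular values by at most $K$ positions. A minimax argument proves it directly. Restrict to the subspace $\ker R$, which has codimension at most $K$; there $S$ agrees with $L$. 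Then apply the Courant–Fischer characterization of $s_{i+K}(S)$.

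Combining the two pieces,
\[
 \sum_{i>K}\min\{s_i(S),r\}^p\le\sum_{i=1}^{n-K}s_{i+K}(S)^p\le\sum_{i=1}^{n-K}s_i(L)^p\le\|L\|_p^p,
\]
and adding the first-block bound gives \eqref{eq:lowrank}.

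The main point to get right is the index shift in the interlacing inequality, in particular the boundary cases $K\ge n$ and $K=0$. Both are immediate from the argument above, so no step appears to be a real obstacle.
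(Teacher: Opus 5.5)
Your proof is correct and follows essentially the same route as the paper. Both bound the first $K$ capped singular values by $r^p$ each. Both control the remaining ones via the Weyl/Ky Fan shift $s_{k+K}(S)\le s_k(L)+s_{K+1}(S-L)=s_k(L)$, and both dispose of $K\ge n$ separately.
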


\begin{proof}
If $K\ge n$, then the left-hand side is at most $nr^p\le Kr^p$. Assume $K<n$. Write the singular values in decreasing order as $s_1(\cdot)\ge\cdots\ge s_n(\cdot)$. By the Ky Fan singular-value inequality (see, e.g., Bhatia~\cite[Chapter~III]{Bhatia}), applied to $S=L+(S-L)$,
\[
 s_{k+K}(S)\le s_k(L)+s_{K+1}(S-L)=s_k(L),
 \qquad 1\le k\le n-K,
\]
because $\rank(S-L)\le K$.
Hence
\[
 \sum_{i=1}^n\min\{s_i(S),r\}^p
 \le Kr^p+\sum_{k=1}^{n-K}s_k(L)^p
 \le Kr^p+\|L\|_p^p.
\]
\end{proof}

\begin{lemma}[McCarthy inequality]\label{lem:mccarthy}
If $X,Y\ge0$ and $p\ge1$, then
\begin{equation}\label{eq:mccarthy}
 \Tr(X+Y)^p\ge \Tr X^p+\Tr Y^p.
\end{equation}
See McCarthy~\cite{McCarthy}.
\end{lemma}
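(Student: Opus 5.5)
We prove the McCarthy inequality through a trace Jensen inequality for contractions, applied to the two pieces of a decomposition of the identity on the support of $X+Y$.

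Put $Z:=X+Y\ge0$ and let $P=\supp Z$. Since $X\le Z$ and $Y\le Z$, the ranges of $X^{1/2}$ and $Y^{1/2}$ lie in the range of $P$. Let $Z^{-1/2}$ denote the generalized inverse, i.e. the inverse of $Z^{1/2}$ on $P$ and zero elsewhere. Define
\[
 C:=Z^{-1/2}X^{1/2},\qquad D:=Z^{-1/2}Y^{1/2}.
\]
Because $Z^{1/2}Z^{-1/2}=P$ and $PX^{1/2}=X^{1/2}$, we get $C^*ZC=X$ and $D^*ZD=Y$. Moreover
\[
 CC^*+DD^*=Z^{-1/2}(X+Y)Z^{-1/2}=P\le I .
\]
In particular $\|C\|_\infty\le1$ and $\|D\|_\infty\le1$.

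\textbf{Key step (trace Jensen).} For convex $g:[0,\infty)\to\mathbb R$ with $g(0)\le 0$, every $Z\ge0$ and every contraction $C$,
\[
 \Tr g(C^*ZC)\le \Tr C^*g(Z)C .
\]
Apply this with $g(t)=t^p$.

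To prove it, take an orthonormal eigenbasis $(e_i)$ of $C^*ZC$. Then
\[
 \Tr g(C^*ZC)=\sum_i g\bigl(\langle ZCe_i,Ce_i\rangle\bigr).
\]
Fix $i$. If $Ce_i=0$, the term is $g(0)\le0$, which is at most $\langle C^*g(Z)Ce_i,e_i\rangle=0$. Otherwise write $Ce_i=\lambda u$ with $\|u\|=1$ and $\lambda^2=\|Ce_i\|^2\le1$. The two estimates below give
\[
 g(\lambda^2\langle Zu,u\rangle)\le\lambda^2 g(\langle Zu,u\rangle)\le\lambda^2\langle g(Z)u,u\rangle=\langle C^*g(Z)Ce_i,e_i\rangle .
\]
\begin{itemize}
\item The first inequality uses convexity together with $g(0)\le0$ and $\lambda^2\in[0,1]$.
\item The second is the scalar Jensen inequality for the probability measure $\sum_k|\langle u,v_k\rangle|^2\delta_{\mu_k}$, where $(\mu_k,v_k)$ is a spectral decomposition of $Z$.
\end{itemize}
Summing over $i$ gives the key step.

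\textbf{Conclusion.} Apply the key step to $(Z,C)$ and to $(Z,D)$, then use cyclicity of the trace and $CC^*+DD^*=P$:
\[
 \Tr X^p+\Tr Y^p\le \Tr Z^p(CC^*+DD^*)=\Tr Z^pP=\Tr Z^p=\Tr(X+Y)^p .
\]
The only delicate point is the generalized inverse when $Z$ is singular. This is handled by the support inclusions above, or alternatively by replacing $X$ with $X+\varepsilon I$ and letting $\varepsilon\downarrow0$.
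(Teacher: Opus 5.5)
Your proof is correct. The paper does not prove this lemma; it only cites McCarthy, so your argument is a self-contained alternative rather than a variant of anything in the text.

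The factorization $X=C^*ZC$, $Y=D^*ZD$ with $CC^*+DD^*=\supp Z$ is verified correctly. The inclusion $\ker Z\subseteq\ker X^{1/2}$ is exactly what gives $X^{1/2}PX^{1/2}=X$. Your proof of the contractive Jensen trace inequality is also sound. The first bullet is $g(\lambda^2s)\le\lambda^2g(s)+(1-\lambda^2)g(0)$, and the second is the vector H\"older--McCarthy inequality $\langle Zu,u\rangle^p\le\langle Z^pu,u\rangle$.

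What your route buys is uniformity and generality. It treats all $p\ge1$ at once, with no case split in $p$. It also goes through verbatim for any convex $g$ on $[0,\infty)$ with $g(0)=0$, giving $\Tr g(X)+\Tr g(Y)\le\Tr g(X+Y)$.

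It also fits the paper's own machinery. Take $X_1=X$, $X_2=Y$, $G=Z$ in Proposition~\ref{prop:reassembly}.
\begin{itemize}
\item Your $C^*=X^{1/2}Z^{-1/2}$ and $D^*=Y^{1/2}Z^{-1/2}$ are exactly the operators $V_j=X_j^{1/2}G^{-1/2}$.
\item Your identity $CC^*+DD^*=P$ is $\sum_jV_j^*V_j=I_{\mathcal H}$.
\item Your $C^*Z^pC=X^{1/2}Z^{p-1}X^{1/2}$ is the weight $\tau_j=V_jG^pV_j^*$ of \eqref{eq:tauj}.
\end{itemize}
Your proof thus amounts to the estimate $\Tr X_j^p=\Tr(V_jGV_j^*)^p\le\Tr\tau_j$, followed by the same summation $\sum_j\Tr\tau_j=\Tr G^p$ that closes that proposition.

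The citation is shorter, and McCarthy's setting is not restricted to matrices. However, nothing beyond finite dimensions is needed in the paper, so your argument could replace the citation without loss.
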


\begin{theorem}[Lossless transfer for a single cap]\label{thm:singlecap}
For every $1\le p<\infty$, $m,n\ge1$, and $r>0$,
\begin{equation}\label{eq:singlecap}
 c^{\capc}_{p,m,n}(r)=c^{\lin}_{p,m,n}.
\end{equation}
\end{theorem}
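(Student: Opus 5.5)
The plan is to prove the two inequalities $c^{\capc}_{p,m,n}(r)\ge c^{\lin}_{p,m,n}$ and $c^{\capc}_{p,m,n}(r)\le c^{\lin}_{p,m,n}$ separately. The first follows from homogeneity. The second comes from splitting each $|A_j|$ at the level $r$, so that Lemma~\ref{lem:lowrank} controls the part above the cap and McCarthy's inequality (Lemma~\ref{lem:mccarthy}) absorbs the resulting rank term. Throughout, write $c:=c^{\lin}_{p,m,n}$. Note that $c\ge1$: take $A_1\neq0$ and $A_j=0$ for $j\ge2$.

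\emph{Lower bound.} The linear ratio in \eqref{eq:clin} is invariant under $A_j\mapsto tA_j$ for $t>0$. Given $A_1,\dots,A_m$ with $\sum_j|A_j|\ne0$, I will rescale so that $\|A_j\|_\infty\le r$ for all $j$ and $\|\sum_jA_j\|_\infty\le r$. Then $h_r$ acts as the identity on the spectra of all the absolute values involved, so the cap ratio coincides with the linear ratio. Taking the supremum gives $c^{\capc}_{p,m,n}(r)\ge c$.

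\emph{Upper bound.} Fix $A_j$ and write $A_j=U_j|A_j|$ with $U_j$ unitary. Let $P_j:=\mathbf 1_{(r,\infty)}(|A_j|)$ and $k_j:=\rank P_j$, and put $K:=\sum_jk_j$. Define
\[
D_j:=h_r(|A_j|)-rP_j\ \ge0 .
\]
Since $D_j$ is $|A_j|$ on $\mathbf 1_{[0,r]}(|A_j|)$ and $0$ on $P_j$, we have $|A_j|-D_j=|A_j|P_j$, which has rank $k_j$. Set $S:=\sum_jA_j$ and $L:=\sum_jU_jD_j$. Then $S-L=\sum_jU_j|A_j|P_j$ has rank at most $K$. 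We also have $|U_jD_j|=D_j$, so the definition of $c$ gives $\|L\|_p\le c\,\|\sum_jD_j\|_p$. Lemma~\ref{lem:lowrank} then yields
\[
\|h_r(|S|)\|_p^p\le Kr^p+c^p\Bigl\|\sum_jD_j\Bigr\|_p^p .
\]

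It remains to compare this with $c^p\|\sum_jh_r(|A_j|)\|_p^p$. Since $\sum_jh_r(|A_j|)=\sum_jD_j+r\sum_jP_j$ is a sum of positive matrices, McCarthy's inequality applied twice gives
\[
\Tr\Bigl(\sum_jh_r(|A_j|)\Bigr)^p\ \ge\ \Tr\Bigl(\sum_jD_j\Bigr)^p+r^p\,\Tr\Bigl(\sum_jP_j\Bigr)^p\ \ge\ \Bigl\|\sum_jD_j\Bigr\|_p^p+r^p\sum_j\Tr P_j .
\]
The last term equals $r^pK$. Multiplying by $c^p$ and using $c\ge1$ shows that the right-hand side of the previous display is at most $c^p\|\sum_jh_r(|A_j|)\|_p^p$. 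This proves $c^{\capc}_{p,m,n}(r)\le c$.

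The main obstacle is choosing the split correctly. The naive choice $B_j=U_jh_r(|A_j|)$ leaves an unabsorbable $Kr^p$ term. Subtracting $rP_j$ inside the cap, so that the removed piece is exactly $rP_j$, is what lets McCarthy's inequality produce the matching term $r^pK$. This works because $\Tr P_j^p=\rank P_j$.
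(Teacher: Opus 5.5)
Your proof is correct and follows essentially the same route as the paper. It uses the same split $h_r(|A_j|)=rP_j+D_j$ (the paper's $rE_j+L_j$), Lemma~\ref{lem:lowrank} applied to $S-L=\sum_jU_j|A_j|P_j$, the linear constant applied to $\sum_jU_jD_j$, McCarthy's inequality twice to produce the matching $Kr^p$ term, and scaling for the reverse inequality. The only differences are cosmetic: you take $U_j$ unitary, where the paper uses the polar partial isometry, and you justify $c\ge1$, which the paper merely asserts.
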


\begin{proof}
Set $c=c^{\lin}_{p,m,n}$ and write the polar decompositions $A_j=U_jP_j$, where $P_j=|A_j|$. Let
\[
 E_j=\mathbf1_{(r,\infty)}(P_j),\qquad
 L_j=P_j(I-E_j).
\]
Then
\[
 h_r(P_j)=rE_j+L_j.
\]
Put
\[
 S=\sum_jA_j,\qquad L=\sum_jU_jL_j,
 \qquad K=\sum_j\rank E_j.
\]
Since
\[
 S-L=\sum_jU_jP_jE_j,
\]
we have $\rank(S-L)\le K$. Lemma~\ref{lem:lowrank} gives
\begin{equation}\label{eq:cap1}
 \|h_r(|S|)\|_p^p\le Kr^p+\|L\|_p^p.
\end{equation}
The polar partial isometry satisfies $U_j^*U_j=\supp P_j$, while $\supp L_j\le\supp P_j$. Hence
\[
 |U_jL_j|^2=L_jU_j^*U_jL_j=L_j^2,
 \qquad\text{so}\qquad |U_jL_j|=L_j.
\]
By the definition of the linear best constant,
\[
 \|L\|_p\le c\left\|\sum_jL_j\right\|_p.
\]
On the other hand, let
\[
 D_r=\sum_jh_r(P_j)=r\sum_jE_j+\sum_jL_j.
\]
Applying \eqref{eq:mccarthy} first to the two positive summands in $D_r$, and then iteratively to $\sum_jE_j$, yields
\begin{align*}
 \|D_r\|_p^p
 &\ge r^p\Tr\left(\sum_jE_j\right)^p
       +\left\|\sum_jL_j\right\|_p^p\\
 &\ge r^p\sum_j\Tr E_j
       +\left\|\sum_jL_j\right\|_p^p\\
 &=Kr^p+\left\|\sum_jL_j\right\|_p^p.
\end{align*}
Since $c\ge1$, combining this with \eqref{eq:cap1} gives
\[
 \|h_r(|S|)\|_p^p
 \le c^p\|D_r\|_p^p.
\]
Therefore $c^{\capc}_{p,m,n}(r)\le c$.

The reverse inequality follows by scaling: given any family approaching the linear extremal ratio, multiply all matrices by a sufficiently small $\varepsilon>0$ so that all relevant singular values do not exceed $r$. On the relevant spectra, $h_r$ then agrees with the identity function, and hence $c^{\capc}_{p,m,n}(r)\ge c$.
\end{proof}

\subsection{Positive mixtures of caps}

\begin{lemma}\label{lem:pmix}
Fix $1\le p<\infty$. Suppose
\begin{equation}\label{eq:qcap}
 q(t)=\sum_{k=1}^Na_kh_{r_k}(t),\qquad a_k\ge0,
\end{equation}
and $q\not\equiv0$. Then there exists a finite positive Borel measure $\nu=\nu_{q,p}$ on $(0,\infty)$ such that, for every $t\ge0$,
\begin{equation}\label{eq:pmix}
 q(t)^p=\int_0^\infty h_u(t)^p\,d\nu(u).
\end{equation}
\end{lemma}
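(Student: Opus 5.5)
The plan is to build $\nu$ explicitly from $q$. First I reduce the identity \eqref{eq:pmix} to a condition on the derivative of $q^p$, and then I check that concavity of $q$ makes the resulting density a monotone function.

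\textbf{Setup.} Order the breakpoints $0<r_1<\dots<r_N$, merging equal ones. Then $q$ has the following properties:
\begin{itemize}
\item $q$ is continuous, piecewise linear, concave and nondecreasing;
\item $q(0)=0$, and $q$ is constant on $[r_N,\infty)$;
\item its right derivative $q'(s)=\sum_{k:\,r_k>s}a_k$ is a nonincreasing, right-continuous step function, vanishing for $s\ge r_N$, with $q'(0)=\sum_k a_k<\infty$.
\end{itemize}

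\textbf{Reduction to a tail function.} For $t\ge0$ and $u>0$ write
\[
h_u(t)^p=\min\{t,u\}^p=\int_0^t p s^{p-1}\mathbf 1_{\{s<u\}}\,ds .
\]
For any finite positive measure $\nu$ on $(0,\infty)$, Tonelli's theorem (the integrand is nonnegative) gives
\[
\int_0^\infty h_u(t)^p\,d\nu(u)=\int_0^t p s^{p-1}\,\nu((s,\infty))\,ds .
\]
On the other hand, $q^p$ is absolutely continuous and $q(0)=0$, so $q(t)^p=\int_0^t p\,q(s)^{p-1}q'(s)\,ds$. It therefore suffices to find $\nu$ with
\[
\nu((s,\infty))=G(s):=\Bigl(\frac{q(s)}{s}\Bigr)^{p-1}q'(s)\qquad\text{for all } s>0 .
\]

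\textbf{Monotonicity of $G$ (the key step).} Because $q$ is concave with $q(0)=0$, the chord slope $s\mapsto q(s)/s$ is nonincreasing and nonnegative. It is bounded by $\lim_{s\to0^+}q(s)/s=q'(0)=\sum_k a_k$. Since $p-1\ge0$, the power $(q(s)/s)^{p-1}$ is also nonincreasing and nonnegative; in the case $p=1$ it is identically $1$. The factor $q'$ is nonincreasing, nonnegative and right-continuous. A product of two nonnegative nonincreasing functions is nonincreasing, so $G$ has the following properties:
\begin{itemize}
\item $G$ is nonincreasing on $(0,\infty)$;
\item $G$ is right-continuous, since $q(s)/s$ is continuous on $(0,\infty)$ and $q'$ is right-continuous;
\item $G$ is bounded by $(\sum_k a_k)^p$;
\item $G(s)=0$ for $s\ge r_N$.
\end{itemize}
This is the only place where concavity is really used. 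I do not expect any further obstacle beyond handling right-continuity at the jump points $r_k$ correctly.

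\textbf{Construction of $\nu$.} Let $\nu$ be the Lebesgue--Stieltjes measure of the nondecreasing right-continuous function $-G$ on $(0,\infty)$. This means $\nu((a,b])=G(a)-G(b)$. Since $G(\infty)=0$, we get $\nu((s,\infty))=G(s)$ for every $s>0$. The measure is finite, with total mass $G(0^+)=(\sum_k a_k)^p$, and it is supported in $(0,r_N]$. The hypothesis $q\not\equiv0$ only ensures that $\nu\neq0$. Concretely, $\nu$ consists of an absolutely continuous part on each interval $(r_{k-1},r_k)$, coming from the decrease of $(q(s)/s)^{p-1}$, plus atoms at the points $r_k$ of mass $(q(r_k)/r_k)^{p-1}a_k$. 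Substituting $\nu((s,\infty))=G(s)$ into the reduction above yields \eqref{eq:pmix} for every $t\ge0$; at $t=0$ both sides vanish.
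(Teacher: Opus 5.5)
Your proposal is correct and follows essentially the same route as the paper: you define $\nu$ through the tail function $\nu((s,\infty))=\bigl(q(s)/s\bigr)^{p-1}q'_+(s)$, justify that this function is a valid tail via the monotonicity of $q(s)/s$, and then recover $q(t)^p$ from the layer-cake formula for $h_u(t)^p$ by Tonelli and the absolute continuity of $q^p$. Your extra explicit description of $\nu$ (total mass $(\sum_k a_k)^p$, atoms of mass $(q(r_k)/r_k)^{p-1}a_k$ at the breakpoints, absolutely continuous part in between) is accurate but not needed for the argument.
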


\begin{proof}
The function $q$ is nonnegative, nondecreasing, concave, satisfies $q(0)=0$, and is constant beyond the largest cap threshold. From \eqref{eq:qcap}, for $t>0$ we directly obtain
\[
 q'_+(t)=\sum_{k=1}^N a_k\mathbf1_{\{t<r_k\}}.
\]
Thus $q'_+$ is nonnegative, right-continuous, nonincreasing, and eventually zero. Moreover, concavity together with $q(0)=0$ implies that $q(t)/t$ is nonincreasing. Hence
\[
 q'_+(t)\left(\frac{q(t)}{t}\right)^{p-1}
\]
is nonnegative, right-continuous, nonincreasing, and eventually zero, and it has a finite limit at $0+$. Thus it is bounded, vanishes at infinity, and is the tail function of a unique finite positive Lebesgue--Stieltjes measure $\nu$ satisfying
\begin{equation}\label{eq:tail}
 \nu((t,\infty))
 =q'_+(t)\left(\frac{q(t)}{t}\right)^{p-1},
 \qquad t>0.
\end{equation}

Using
\[
 h_u(t)^p
 =p\int_0^t s^{p-1}\mathbf1_{\{s<u\}}\,ds
\]
and Tonelli's theorem, we obtain
\begin{align*}
 \int h_u(t)^p\,d\nu(u)
 &=p\int_0^ts^{p-1}\nu((s,\infty))\,ds\\
 &=p\int_0^ts^{p-1}q'_+(s)
      \left(\frac{q(s)}s\right)^{p-1}ds\\
 &=p\int_0^tq(s)^{p-1}q'_+(s)\,ds
 =q(t)^p.
\end{align*}
The last step uses the absolute continuity of $q$ and $q(0)=0$.
\end{proof}

\subsection{Noncommutative reassembly}

The main obstacle is that the scalar cap decomposition has to survive a noncommutative sum. At the heart of the reassembly step is the following finite-dimensional weighted Schatten contraction. Its interpolation mechanism is the finite-dimensional tracial counterpart of the positive-map extension theorem of Haagerup--Junge--Xu~\cite[Theorem~5.1]{HJX}; we include the elementary proof needed here for completeness.

\begin{lemma}[Weighted Schatten $L_p$ contraction]\label{lem:weightedLp}
Let $\Phi:\mathcal A\to\mathcal B$ be a unital positive map, let $\sigma\in\mathcal B$ be strictly positive, and set
\[
 \tau:=\Phi_*(\sigma).
\]
Assume that $\tau$ is strictly positive. Then, for every $1\le p\le\infty$ and every $X\in\mathcal A$,
\begin{equation}\label{eq:weightedLp}
 \left\|\sigma^{1/(2p)}\Phi(X)\sigma^{1/(2p)}\right\|_p
 \le
 \left\|\tau^{1/(2p)}X\tau^{1/(2p)}\right\|_p.
\end{equation}
For $p=\infty$, we use the convention $1/p=0$.
\end{lemma}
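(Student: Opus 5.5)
We prove \eqref{eq:weightedLp} by complex interpolation between the endpoints $p=1$ and $p=\infty$, in the spirit of Haagerup--Junge--Xu but carried out by hand in finite dimensions. Consider the weighted $L_p$ spaces $L_p(\tau)$ with norm $\|X\|_{L_p(\tau)}:=\|\tau^{1/(2p)}X\tau^{1/(2p)}\|_p$ on $\mathcal A$, and similarly $L_p(\sigma)$ on $\mathcal B$. In finite dimensions, with $\tau,\sigma$ strictly positive, these are all norms on the same underlying vector space. The claim is then exactly that $\Phi$ is a contraction $L_p(\tau)\to L_p(\sigma)$ for every $p$.

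\emph{Endpoint $p=\infty$.} Here the weights disappear, and we need $\|\Phi(X)\|_\infty\le\|X\|_\infty$ for a unital positive map. For self-adjoint $X$ this follows from $-\|X\|I\le X\le\|X\|I$ and positivity plus unitality. For general $X$ we use the Russo--Dye theorem: a unital positive map between $C^*$-algebras has norm $\|\Phi(1)\|=1$.

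\emph{Endpoint $p=1$.} We need $\|\sigma^{1/2}\Phi(X)\sigma^{1/2}\|_1\le\|\tau^{1/2}X\tau^{1/2}\|_1$. We argue by duality. Pick $W$ with $\|W\|_\infty\le1$ and
\[
\|\sigma^{1/2}\Phi(X)\sigma^{1/2}\|_1=\Tr[W\sigma^{1/2}\Phi(X)\sigma^{1/2}].
\]
Transferring the weight through $\Phi$ is the delicate point. We would rewrite the pairing as a bilinear form and invoke the Schwarz-type inequality for positive maps, in the form: the $2\times2$ block matrix $\begin{pmatrix}1&Y\\Y^*&\cdot\end{pmatrix}$ is positive. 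A cleaner alternative is to work directly with the duality $L_1(\tau)^*=L_\infty$ under the pairing $\langle X,Y\rangle=\Tr[\tau^{1/2}X\tau^{1/2}Y]$.

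The cleanest route is probably the following. Define $\Psi(Y):=\tau^{-1/2}\Phi_*(\sigma^{1/2}Y\sigma^{1/2})\tau^{-1/2}$. Then $\Psi$ is positive, and it is unital precisely because $\Phi_*(\sigma)=\tau$. Moreover,
\[
\Tr[\sigma^{1/2}\Phi(X)\sigma^{1/2}W]=\Tr[\tau^{1/2}X\tau^{1/2}\Psi(W)].
\]
So the $p=1$ bound reduces to $\|\Psi(W)\|_\infty\le\|W\|_\infty$, which is the $p=\infty$ case applied to $\Psi$. This is where strict positivity of $\tau$ is used.

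\emph{Interpolation.} Fix $X$ and a dual element $Y$ with $\|Y\|_{p'}\le1$. Write $X=\tau^{-1/(2p)}Z\tau^{-1/(2p)}$ and polar-decompose $Z=V|Z|$. Define the analytic families
\[
X(z)=\tau^{-z/2}V|Z|^{pz}\tau^{-z/2},\qquad Y(z)=\text{the analogous family for }Y\text{ with exponent }p'(1-z),
\]
and set
\[
F(z)=\Tr[\sigma^{z/2}\Phi(X(z))\sigma^{z/2}\,Y(z)],
\]
normalized so that $F(1/p)$ equals the quantity to be estimated. On $\Re z=0$ the $p=\infty$ bound, together with unitarity of the imaginary powers, gives $|F|\le1$. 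On $\Re z=1$ the $p=1$ bound gives $|F|\le1$. The three-lines lemma then yields the claim, provided $F$ is bounded on the strip; this is automatic in finite dimensions with strictly positive weights. Equivalently, one can cite Kosaki/Stein interpolation for the weighted spaces.

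The main obstacle is bookkeeping. We must check that imaginary powers of $\sigma$ and $\tau$ are unitary, so that they do not change the norms at the boundary lines, and that the duality pairing is consistent. On the line $\Re z=1$ the imaginary-power factors have to be absorbed into $W$ and $X$ without destroying the $p=1$ estimate. Since that estimate holds for all $X$ and all contractions $W$, this works out.
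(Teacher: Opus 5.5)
Your proposal is correct and is essentially the paper's proof. Your map $\Psi(W)=\tau^{-1/2}\Phi_*(\sigma^{1/2}W\sigma^{1/2})\tau^{-1/2}$ is exactly the trace adjoint $K_*$ of the paper's $K=T_1$, so your $p=1$ endpoint coincides with the paper's, and your $F(z)$ is the paper's analytic family $T_z(Y)=\sigma^{z/2}\Phi(\tau^{-z/2}Y\tau^{-z/2})\sigma^{z/2}$ paired against an unweighted dual family; the only difference is that you write out the three-lines argument by hand where the paper cites Stein interpolation.
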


\begin{proof}
The case $p=\infty$ is the usual operator-norm contraction of a unital positive map. For $1\le p<\infty$, set
\begin{equation}\label{eq:Tz}
 T_z(Y):=\sigma^{z/2}\Phi\!\left(\tau^{-z/2}Y\tau^{-z/2}\right)\sigma^{z/2},
 \qquad 0\le\Re z\le1.
\end{equation}
For $z=it$, the imaginary powers $\sigma^{it/2}$ and $\tau^{-it/2}$ are unitary. Hence unitary invariance together with the operator-norm contractivity of a unital positive map (Russo--Dye) gives
\begin{equation}\label{eq:endinf}
 \|T_{it}(Y)\|_\infty\le\|Y\|_\infty.
\end{equation}
Also $K:=T_1$ is positive and trace-preserving, since by the definition of the trace adjoint,
\[
 \Tr K(Y)
 =\Tr\!\left[\sigma\,\Phi(\tau^{-1/2}Y\tau^{-1/2})\right]
 =\Tr\!\left[\tau\,\tau^{-1/2}Y\tau^{-1/2}\right]
 =\Tr Y.
\]
Hence $K_*$ is unital positive and, by duality and Russo--Dye, $\|K\|_{1\to1}\le1$. Since
\[
 T_{1+it}(Y)=\sigma^{it/2}K\!\left(\tau^{-it/2}Y\tau^{-it/2}\right)\sigma^{it/2},
\]
we have
\begin{equation}\label{eq:end1}
 \|T_{1+it}(Y)\|_1\le\|Y\|_1.
\end{equation}
The family $T_z$ is analytic on the strip and uniformly bounded on its closure in finite dimension. Interpolating between the $L_\infty$ and $L_1$ boundary estimates by Stein--Riesz--Thorin therefore yields
\[
 \|T_{1/p}(Y)\|_p\le\|Y\|_p.
\]
This is exactly the finite-dimensional interpolation mechanism relevant to Haagerup--Junge--Xu~\cite[Theorem~5.1]{HJX}. Substituting $Y=\tau^{1/(2p)}X\tau^{1/(2p)}$ gives \eqref{eq:weightedLp}.
\end{proof}

With the weighted contraction in hand, we now reassemble the scalar cap decomposition. The identity \eqref{eq:pmix} applies separately to each $P_j$, whereas the Schatten norm involves the noncommutative sum $\sum_j h_u(P_j)$. The required estimate is the following.

\begin{proposition}\label{prop:reassembly}
Let $1\le p<\infty$, let $q$ be as in \eqref{eq:qcap}, and let $\nu=\nu_{q,p}$ be the positive measure from Lemma~\ref{lem:pmix}. For any positive semidefinite matrices $P_1,\dots,P_m$,
\begin{equation}\label{eq:reassembly}
 \int_0^\infty
 \left\|\sum_{j=1}^m h_u(P_j)\right\|_p^p\,d\nu(u)
 \le
 \left\|\sum_{j=1}^m q(P_j)\right\|_p^p.
\end{equation}
\end{proposition}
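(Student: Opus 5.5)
The plan is to realize $\sum_j h_u(P_j)$, for every $u$ at once, as the image of a single unital positive map weighted by $Q:=\sum_j q(P_j)$. Lemma~\ref{lem:weightedLp} will then move the problem onto the individual summands, where the operators are functions of one $P_j$ and the scalar identity \eqref{eq:pmix} can be used. Write $Q_j:=q(P_j)$. Since $q(t)>0$ exactly for $t>0$, the ratio
\[
 \rho_u(t):=h_u(t)/q(t)\quad (t>0),\qquad \rho_u(0):=0,
\]
is bounded. Indeed $q(t)\ge (\sum_k a_k)\min\{t,r_{\min}\}$ up to a constant, where $r_{\min}$ is the smallest threshold $r_k$ with $a_k>0$. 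Functional calculus then gives
\[
 h_u(P_j)=Q_j^{1/2}\rho_u(P_j)Q_j^{1/2}.
\]

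First assume $Q$ is invertible; the general case is handled at the end. Let $\mathcal A=\Mn^{\oplus m}$ and define
\[
 \Phi(X_1,\dots,X_m):=Q^{-1/2}\Big(\sum_jQ_j^{1/2}X_jQ_j^{1/2}\Big)Q^{-1/2}.
\]
This map is positive and unital, because $\Phi(I,\dots,I)=Q^{-1/2}QQ^{-1/2}=I$. With $X^{(u)}:=(\rho_u(P_1),\dots,\rho_u(P_m))$ we get
\[
 \sum_jh_u(P_j)=Q^{1/2}\Phi(X^{(u)})Q^{1/2}.
\]
Take $\sigma:=Q^p$, so that $\sigma^{1/(2p)}=Q^{1/2}$. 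Its trace adjoint is
\[
 \tau=\Phi_*(\sigma)=\bigoplus_j\tau_j,\qquad \tau_j=Q_j^{1/2}Q^{p-1}Q_j^{1/2}.
\]
Lemma~\ref{lem:weightedLp} then yields
\[
 \Big\|\sum_jh_u(P_j)\Big\|_p^p\le\sum_j\Tr\big(\tau_j^{1/(2p)}\rho_u(P_j)\tau_j^{1/(2p)}\big)^p.
\]
Next, apply the Araki--Lieb--Thirring inequality in the form $\Tr(B^{1/2}AB^{1/2})^p\le\Tr(B^{p/2}A^pB^{p/2})$ for $A,B\ge0$ and $p\ge1$, with $B=\tau_j^{1/p}$ and $A=\rho_u(P_j)$. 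Each term is then bounded by $\Tr[\tau_j\rho_u(P_j)^p]$.

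Now integrate in $u$ against $\nu$. By \eqref{eq:pmix}, $\int\rho_u(t)^p\,d\nu(u)=q(t)^{-p}q(t)^p=1$ for $t>0$, and the integral is $0$ for $t=0$. Hence $\int\rho_u(P_j)^p\,d\nu(u)=\supp P_j\le I$, and Tonelli together with linearity of the trace gives
\[
 \int\Big\|\sum_jh_u(P_j)\Big\|_p^pd\nu(u)\le\sum_j\Tr\tau_j=\sum_j\Tr(Q^{p-1}Q_j)=\Tr Q^p,
\]
which is \eqref{eq:reassembly}.

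The main obstacle I expect is the degenerate case where $Q$ or $\tau$ is not strictly positive, since Lemma~\ref{lem:weightedLp} requires strict positivity. I would first try restriction: $\supp Q_j\le\supp Q$ for every $j$, so everything can be compressed to the range of $Q$, and each block can be compressed to $\supp Q_j$. On those subspaces $\tau_j$ is invertible, because $Q^{p-1}$ is invertible on $\supp Q$. Since $\rho_u(P_j)$ vanishes off $\supp P_j=\supp Q_j$, nothing is lost by this compression. If the bookkeeping becomes awkward, the fallback is to replace $Q$ by $Q+\varepsilon I$ in the definitions of $\Phi$ and $\sigma$, adding an extra summand $\varepsilon^{1/2}X_0\varepsilon^{1/2}$ with $X_0=0$ to keep $\Phi$ unital, and then let $\varepsilon\to0$. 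Both sides depend continuously on $\varepsilon$ in finite dimension, and $\nu$ is finite, so the limit passes through the integral. The remaining points, boundedness of $\rho_u$ and measurability in $u$ for Tonelli, are routine.
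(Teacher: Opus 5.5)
Your proposal is correct and follows the paper's proof essentially step for step. It uses the same unital positive map built from $Q_j^{1/2}Q^{-1/2}$, the weight $\sigma=Q^p$ with blocks $\tau_j=Q_j^{1/2}Q^{p-1}Q_j^{1/2}$, Lemma~\ref{lem:weightedLp}, Araki--Lieb--Thirring, and integration against $\nu$. Your compression to $\supp Q$ and to each $\supp Q_j$ is how the paper handles the degenerate case: it keeps $\Phi$ unital, since the unit of the compressed domain is $(\supp Q_j)_j$. Note that the $\varepsilon$-fallback $Q\mapsto Q+\varepsilon I$ alone would not make the blocks $\tau_j$ invertible when some $P_j$ is singular, so the blockwise compression to $\supp Q_j$ is needed in any case.
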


\begin{proof}
If $q\equiv0$, the claim is immediate. Hence assume $q\not\equiv0$. Set
\[
 X_j=q(P_j),\qquad G=\sum_{j=1}^mX_j,
\]
and
\[
 H_{j,u}=h_u(P_j),\qquad D_u=\sum_{j=1}^mH_{j,u}.
\]
Since $q(t)>0$ for every $t>0$,
\[
 \supp X_j=\supp P_j.
\]
Let $\mathcal H=\supp G$ and $\mathcal H_j=\supp X_j$, and restrict to these support spaces. Then $G>0$ on $\mathcal H$ and $X_j>0$ on $\mathcal H_j$; moreover $H_{j,u}$ acts on $\mathcal H_j$ and $D_u$ on $\mathcal H$.

Define
\[
 V_j:=X_j^{1/2}G^{-1/2}:\mathcal H\to\mathcal H_j.
\]
Then
\[
 \sum_jV_j^*V_j
 =G^{-1/2}\left(\sum_jX_j\right)G^{-1/2}
 =I_{\mathcal H}.
\]
Therefore
\[
 \Phi:\bigoplus_{j=1}^mB(\mathcal H_j)\to B(\mathcal H),
 \qquad
 \Phi((Y_j)_j)=\sum_jV_j^*Y_jV_j
\]
is a UCP map.

Let
\[
 R_{j,u}:=X_j^{-1/2}H_{j,u}X_j^{-1/2}.
\]
Because $X_j$ and $H_{j,u}$ are both functions of the same matrix $P_j$, they commute, and
\begin{equation}\label{eq:PhiR}
 \Phi((R_{j,u})_j)=G^{-1/2}D_uG^{-1/2}.
\end{equation}
This is the only commutativity used at this stage: no commutativity is assumed between different $P_j$'s, nor between any $X_j$ and their sum $G$.

In Lemma~\ref{lem:weightedLp}, take
\[
 \sigma=G^p.
\]
The $j$th direct-sum block of the trace-adjoint weight $\tau=\Phi_*(G^p)$ is
\begin{equation}\label{eq:tauj}
 \tau_j=V_jG^pV_j^*
 =X_j^{1/2}G^{p-1}X_j^{1/2},
\end{equation}
and it is strictly positive on $\mathcal H_j$: indeed, $V_j:\mathcal H\to\mathcal H_j$ is onto, so $V_j^*$ is injective, while $G^p>0$ on $\mathcal H$. Since $\sigma^{1/(2p)}=G^{1/2}$, identity \eqref{eq:PhiR} shows that the left-hand side of the weighted contraction is exactly $\|D_u\|_p$. Using the direct-sum form of $\tau$ and $(R_{j,u})_j$ on the right-hand side, we obtain
\begin{equation}\label{eq:weightedapply}
 \|D_u\|_p^p
 \le
 \sum_j
 \Tr\left(
 \tau_j^{1/(2p)}R_{j,u}\tau_j^{1/(2p)}
 \right)^p.
\end{equation}

In general $\tau_j$ and $R_{j,u}$ do not commute. This is precisely the genuinely noncommutative point of the argument. For $A,B\ge0$ and $p\ge1$, the Araki--Lieb--Thirring inequality~\cite{Araki} gives
\begin{equation}\label{eq:ALTform}
 \Tr\left(A^{1/(2p)}BA^{1/(2p)}\right)^p
 \le \Tr(AB^p).
\end{equation}
Applying this to each summand in \eqref{eq:weightedapply} removes the noncommutative sandwich without imposing any commutativity assumption and yields
\begin{equation}\label{eq:ALTapply}
 \|D_u\|_p^p
 \le\sum_j\Tr(\tau_jR_{j,u}^p).
\end{equation}

We now return to the local commutativity inside a fixed $j$. Since $X_j$ and $H_{j,u}$ commute,
\[
 R_{j,u}^p=X_j^{-p}H_{j,u}^p.
\]
The spectral-calculus version of Lemma~\ref{lem:pmix} gives
\[
 \int H_{j,u}^p\,d\nu(u)=q(P_j)^p=X_j^p,
\]
so on $\mathcal H_j$,
\begin{equation}\label{eq:Rintegral}
 \int R_{j,u}^p\,d\nu(u)=I_{\mathcal H_j}.
\end{equation}
Integrating \eqref{eq:ALTapply} and using \eqref{eq:Rintegral},
\begin{align*}
 \int\|D_u\|_p^p\,d\nu(u)
 &\le\sum_j\Tr\tau_j\\
 &=\sum_j\Tr\left(G^{p-1}X_j\right)\\
 &=\Tr G^p
 =\left\|\sum_jq(P_j)\right\|_p^p.
\end{align*}
This is \eqref{eq:reassembly}.
\end{proof}

The mechanism of Proposition~\ref{prop:reassembly} is worth emphasizing. Commutativity is used only for functions of a single $P_j$. All interactions among different matrices are absorbed by the UCP map $\Phi$; the choice $\sigma=G^p$ recovers the unweighted sum $D_u$, and the Araki--Lieb--Thirring inequality is exactly what controls the remaining noncommutative sandwich. This replaces the quadratic-form reassembly available at $p=2$ and is what allows the argument to work for general finite Schatten exponents.

\section{Proof of the sharp concave-function transfer}

We now combine the preceding ingredients and pass from finite cap mixtures to arbitrary nonnegative concave functions.

Every nonnegative concave function $f:[0,\infty)\to[0,\infty)$ is nondecreasing. We first treat the case $f(0)=0$.

\begin{lemma}\label{lem:finiteinterp}
Let $f:[0,\infty)\to[0,\infty)$ be concave and let $0<t_1<\cdots<t_N$. Then there is a positive finite combination of caps $q$ with $q(0)=0$ and $q(t_i)=f(t_i)$ for every $i$.
\end{lemma}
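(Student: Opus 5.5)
Set $t_0:=0$ and let $q$ be the piecewise linear interpolant of the points $(t_0,0),(t_1,f(t_1)),\dots,(t_N,f(t_N))$, continued as the constant $f(t_N)$ for $t\ge t_N$. We will show that $q$ is a positive finite combination of caps. The tool is the identity
\[
 \sum_k a_k h_{r_k}(t)=\int_0^t\sum_k a_k\mathbf 1_{\{s<r_k\}}\,ds .
\]
It shows that a function with $q(0)=0$ belongs to the cap cone exactly when it is continuous and piecewise linear, with slopes that are nonnegative, nonincreasing, and eventually zero.

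Define the chord slopes $\delta_i:=(f(t_i)-f(t_{i-1}))/(t_i-t_{i-1})$ for $1\le i\le N$, with $f(t_0)$ replaced by $0$ in $\delta_1$, and set $\delta_{N+1}:=0$. Then put
\[
 q(t)=\sum_{i=1}^{N}(\delta_i-\delta_{i+1})\,h_{t_i}(t).
\]
By telescoping, on $(t_{i-1},t_i)$ the slope of $q$ is $\sum_{k\ge i}(\delta_k-\delta_{k+1})=\delta_i$, and for $t>t_N$ it is $0$. Since $q(0)=0$, this gives $q(t_i)=f(t_i)$ for every $i$.

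It remains to check $a_i=\delta_i-\delta_{i+1}\ge0$, that is, $\delta_1\ge\delta_2\ge\cdots\ge\delta_N\ge0$. There are three parts.

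\begin{itemize}
\item For $2\le i\le N-1$, the inequality $\delta_i\ge\delta_{i+1}$ is the three-chord inequality for the concave $f$.
\item For $\delta_1\ge\delta_2$: because $f(0)\ge0$, we have $\delta_1=f(t_1)/t_1\ge (f(t_1)-f(0))/t_1$. The right-hand side is at least $\delta_2$ by concavity on $0<t_1<t_2$.
\item For $\delta_N\ge0$: $f$ is nondecreasing, as noted just before the lemma (a concave function that decreased somewhere would eventually become negative). Hence every $\delta_i\ge0$.
\end{itemize}

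The only delicate point is the first slope. The lemma does not assume $f(0)=0$, yet the interpolant is forced through the origin. The observation $f(0)\ge0$ handles this: replacing $f(0)$ by $0$ can only increase the first slope, so it preserves monotonicity of the slopes. If all $\delta_i$ vanish then $f(t_i)=0$ for all $i$, and $q\equiv0$ works as the empty combination (or all $a_k=0$).
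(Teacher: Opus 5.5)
Your proof is correct and follows the paper's argument. It uses the same chord slopes (with $f(0)$ replaced by $0$ in the first one), the same telescoping cap combination, and the same use of $f(0)\ge0$, concavity and monotonicity to get $\delta_1\ge\cdots\ge\delta_N\ge0$; the only cosmetic difference is that the paper puts the last coefficient on a cap $h_R$ with $R>t_N$ rather than on $h_{t_N}$, which changes $q$ only beyond $t_N$.
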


\begin{proof}
Set
\[
 d_1:=\frac{f(t_1)}{t_1},\qquad
 d_k:=\frac{f(t_k)-f(t_{k-1})}{t_k-t_{k-1}}\quad(2\le k\le N).
\]
Since $f\ge0$ is concave,
\[
 d_1\ge \frac{f(t_1)-f(0)}{t_1}\ge d_2\ge\cdots\ge d_N\ge0.
\]
For any $R>t_N$,
\begin{equation}\label{eq:interpformula}
 q(t):=d_Nh_R(t)+\sum_{k=1}^{N-1}(d_k-d_{k+1})h_{t_k}(t)
\end{equation}
has nonnegative coefficients and satisfies $q(0)=0$ and $q(t_i)=f(t_i)$.
\end{proof}

\begin{proposition}\label{prop:f0finite}
Let $1\le p<\infty$, and let $f:[0,\infty)\to[0,\infty)$ be concave with $f(0)=0$. Then
\begin{equation}\label{eq:f0finite}
 \left\|f\left(\left|\sum_jA_j\right|\right)\right\|_p
 \le c^{\lin}_{p,m,n}
 \left\|\sum_jf(|A_j|)\right\|_p.
\end{equation}
\end{proposition}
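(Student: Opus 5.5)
Since only finitely many eigenvalues occur, I would reduce $f$ to a finite cap mixture and then combine the single-cap theorem with the reassembly proposition.

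\emph{Reduction.} Let $S=\sum_jA_j$, $P_j=|A_j|$, and let $0<t_1<\cdots<t_N$ list the nonzero eigenvalues of $|S|,P_1,\dots,P_m$. Lemma~\ref{lem:finiteinterp} gives a positive cap combination $q$ with $q(0)=0=f(0)$ and $q(t_i)=f(t_i)$. Spectral calculus only sees these points, so $f(|S|)=q(|S|)$ and $f(P_j)=q(P_j)$. It therefore suffices to prove \eqref{eq:f0finite} for $q$. If $q\equiv0$, the left side vanishes and there is nothing to prove.

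\emph{Main step.} Take $\nu=\nu_{q,p}$ from Lemma~\ref{lem:pmix}. The spectral-calculus form of \eqref{eq:pmix} gives
\[
\|q(|S|)\|_p^p=\Tr q(|S|)^p=\int_0^\infty \Tr h_u(|S|)^p\,d\nu(u)=\int_0^\infty\|h_u(|S|)\|_p^p\,d\nu(u).
\]
For each $u>0$, Theorem~\ref{thm:singlecap} gives
\[
\|h_u(|S|)\|_p^p\le (c^{\lin}_{p,m,n})^p\Big\|\sum_jh_u(P_j)\Big\|_p^p.
\]
This holds with the convention that both sides vanish when the denominator is zero: $\sum_jh_u(P_j)=0$ forces every $P_j=0$, hence $S=0$. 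Integrating against $\nu$ and then applying Proposition~\ref{prop:reassembly} yields
\[
\|q(|S|)\|_p^p\le (c^{\lin}_{p,m,n})^p\int\Big\|\sum_jh_u(P_j)\Big\|_p^p\,d\nu(u)\le (c^{\lin}_{p,m,n})^p\Big\|\sum_jq(P_j)\Big\|_p^p.
\]
Taking $p$th roots gives \eqref{eq:f0finite}.

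\emph{Checks.} The only points needing care are routine:
\begin{itemize}
\item Measurability of $u\mapsto\|\sum_jh_u(P_j)\|_p^p$: it is continuous in $u$.
\item The interchange of trace and integral: all traces are finite sums over eigenvalues, and $\nu$ is finite.
\end{itemize}
The substantive content is already carried by Theorem~\ref{thm:singlecap} and Proposition~\ref{prop:reassembly}, so no step here is a real obstacle. The one point to verify is that the single-cap constant equals $c^{\lin}$ uniformly in $u$, which Theorem~\ref{thm:singlecap} states for every $r>0$.
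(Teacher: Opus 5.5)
Your proposal is correct and follows the paper's proof essentially step for step. You interpolate $f$ by a finite cap combination $q$ on the relevant spectra, expand $\|q(|S|)\|_p^p$ through $\nu_{q,p}$, apply Theorem~\ref{thm:singlecap} for each $u>0$, and close with Proposition~\ref{prop:reassembly}; your remarks on the zero-denominator case and on measurability are correct details that the paper leaves implicit.
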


\begin{proof}
Set
\[
 C=\left|\sum_jA_j\right|,\qquad P_j=|A_j|,
 \qquad c=c^{\lin}_{p,m,n}.
\]
Apply Lemma~\ref{lem:finiteinterp} to the positive eigenvalues of $C,P_1,\dots,P_m$. Since $f(0)=q(0)=0$, spectral calculus gives $q(C)=f(C)$ and $q(P_j)=f(P_j)$. If $q\equiv0$, the claim is immediate. Otherwise let $\nu$ be the measure from Lemma~\ref{lem:pmix}. Theorem~\ref{thm:singlecap} and Proposition~\ref{prop:reassembly} give
\begin{align*}
 \|f(C)\|_p^p
 &=\|q(C)\|_p^p
 =\int\|h_u(C)\|_p^p\,d\nu(u)\\
 &\le c^p\int
 \left\|\sum_jh_u(P_j)\right\|_p^p\,d\nu(u)\\
 &\le c^p\left\|\sum_jq(P_j)\right\|_p^p
 =c^p\left\|\sum_jf(P_j)\right\|_p^p.
\end{align*}
\end{proof}

\begin{lemma}\label{lem:f0positive}
Proposition~\ref{prop:f0finite} remains valid for every nonnegative concave function $f$.
\end{lemma}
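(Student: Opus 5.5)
The plan is to remove the hypothesis $f(0)=0$ by perturbing at zero. Take a nonnegative concave $f$ with $f(0)=a>0$; the case $a=0$ is Proposition~\ref{prop:f0finite}. Fix the matrices $A_1,\dots,A_m$ and write $C=|\sum_jA_j|$ and $P_j=|A_j|$. The difficulty is that $f(C)$ and $f(P_j)$ now pick up the value $a$ on the kernels, so the contribution on null spaces must be controlled.

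First step: for $\varepsilon>0$ define
\[
 f_\varepsilon(t):=\min\{f(t),\,f(\varepsilon)\,t/\varepsilon\}.
\]
This function is nonnegative and concave, as a minimum of two concave functions, and it vanishes at $0$. It agrees with $f$ on $[\varepsilon,\infty)$: by concavity $f(t)/t$ is nonincreasing, so for $t\ge\varepsilon$ we have $f(t)\le f(\varepsilon)t/\varepsilon$. Proposition~\ref{prop:f0finite} applied to $f_\varepsilon$ gives the inequality for $f_\varepsilon$. This alone does not settle the case, because $f_\varepsilon(0)=0\ne f(0)$ and the kernels are lost.

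Second step: recover the kernels by replacing the matrices with slightly perturbed ones whose relevant spectra avoid $[0,\varepsilon)$. The cleanest route is to enlarge the dimension. Choose a unitary $W$ in $M_{n}(\mathbb C)$ and small $\delta>0$ so that the modified $A_j^\delta=A_j+\delta W_j$ have invertible $|A_j^\delta|$ and invertible $|\sum_j A^\delta_j|$. Generic choices achieve this: the failure sets are proper algebraic subsets, so almost every small perturbation works. The construction must keep $n$ fixed, since the constant is $c^{\lin}_{p,m,n}$.

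Third step: pass to the limit. Once all singular values are at least some $\eta>0$, the identity $f_{\eta}=f$ on the relevant spectra gives
\[
 \|f(|\textstyle\sum_jA_j^\delta|)\|_p\le c\,\|\sum_j f(|A_j^\delta|)\|_p .
\]
Then let $\delta\to0$. Because $f$ is concave and finite on $[0,\infty)$, it is continuous on $(0,\infty)$. At $0$ it may jump, but only downward-semicontinuously, with $f(0)\le\liminf_{t\to0^+}f(t)$. The right-hand side therefore converges to $\|\sum_j \tilde f(P_j)\|_p$ with $\tilde f(0)=f(0^+)\ge f(0)$.

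The main obstacle is exactly this point: the limit would give the inequality with $\tilde f=f(0^+)$ on both sides. The left side only needs the upper bound $f(C)\le \tilde f(C)$, which holds trivially. The right side, however, requires $\tilde f$ and not $f$, and $\tilde f\ge f$ goes in the wrong direction. To resolve this, I would first reduce to $f$ continuous at $0$. If $f(0)<f(0^+)$, then $\tilde f$ is also concave. Comparing, the kernel projection of $C$ has rank at most $\sum_j\rank\ker P_j$ on the left, which suggests a McCarthy-type kernel bookkeeping argument as in Theorem~\ref{thm:singlecap}. Failing that, I would work with continuous $f$ only: the perturbation limit is then exact on both sides, and continuity in $\delta$ of $\|f(|\cdot|)\|_p$ follows from continuity of singular values.
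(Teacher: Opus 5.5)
Your argument is complete only for $f$ continuous at $0$. In that case it works: $f_\eta(t)=\min\{f(t),f(\eta)t/\eta\}$ is concave, vanishes at $0$ and equals $f$ on $[\eta,\infty)$. So Proposition~\ref{prop:f0finite} gives the inequality for generic invertible perturbations $A_j^\delta$ with $n$ fixed, and continuity of $X\mapsto f(|X|)$ lets you pass to $\delta\to0$.

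The lemma, however, concerns \emph{every} nonnegative concave $f$, including those with $0<f(0)<f(0^+)$, e.g.\ $f(0)=\tfrac12$ and $f(t)=1$ for $t>0$. For these, as you note yourself, the limit only yields the inequality for the function whose value at $0$ is $f(0^+)$, which is too weak on the right-hand side. Neither remedy you propose closes this:
\begin{itemize}
\item The kernel bookkeeping rests on the claim that the kernel projection of $C$ has rank at most $\sum_j\dim\ker P_j$. This is false: $A_1=I$, $A_2=-I$ give $\ker C=\mathbb C^n$ while $\ker P_1=\ker P_2=\{0\}$.
\item Retreating to continuous $f$ proves a strictly weaker statement.
\end{itemize}

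The missing idea is to regularize $f$ at $0$ \emph{before} perturbing, using the spectral gap of the unperturbed matrices. Let $\lambda_*$ be the smallest positive eigenvalue among $C,P_1,\dots,P_m$, and fix $0<\delta<\lambda_*$. Let $g$ be the chord from $(0,f(0))$ to $(\delta,f(\delta))$ on $[0,\delta]$, and $f$ on $[\delta,\infty)$. Then:
\begin{itemize}
\item By the three-slope inequality for concave functions, the chord slope is at least $f'_-(\delta)\ge f'_+(\delta)$, so $g$ is nonnegative and concave.
\item $g$ is continuous at $0$ and keeps $g(0)=f(0)$.
\item All the spectra lie in $\{0\}\cup[\lambda_*,\infty)$, so $g(C)=f(C)$ and $g(P_j)=f(P_j)$.
\end{itemize}
Running your perturbation argument for $g$, where both sides now converge exactly, gives the inequality for $f$. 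This is the paper's proof. It interpolates $g$ by caps on the invertible perturbations, which is equivalent to your use of $f_\eta$.
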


\begin{proof}
Fix $A_1,\dots,A_m$, and write $C=|\sum_jA_j|$ and $P_j=|A_j|$. If all these matrices are zero, the claim is immediate. Let $\lambda_*>0$ be the smallest positive eigenvalue among $C,P_1,\dots,P_m$, and choose $0<\delta<\lambda_*$. On $[0,\delta]$, replace $f$ by the line segment joining $(0,f(0))$ and $(\delta,f(\delta))$, while leaving it unchanged on $[\delta,\infty)$; denote the resulting function by $\widetilde f$. Since the secant slopes of a concave function are nonincreasing, $\widetilde f$ is still nonnegative, concave, nondecreasing, and continuous at $0$. Moreover,
\begin{equation}\label{eq:ftildeequal}
 \widetilde f(C)=f(C),\qquad \widetilde f(P_j)=f(P_j).
\end{equation}

The polynomial
\[
 \left(\prod_{j=1}^m\det A_j\right)
 \det\left(\sum_{j=1}^mA_j\right)
\]
on $(\Mn)^m$ is not identically zero. The zero set of a nonzero complex polynomial has empty interior, hence its nonzero set is dense. Choose $A_j^{(\varepsilon)}\to A_j$ so that every $A_j^{(\varepsilon)}$ and $\sum_jA_j^{(\varepsilon)}$ are invertible. Lemma~\ref{lem:finiteinterp}, applied to all eigenvalues of the corresponding absolute-value matrices, gives a positive cap combination $q_\varepsilon$ with $q_\varepsilon(0)=0$ that agrees there with $\widetilde f$. Applying Proposition~\ref{prop:f0finite} to $q_\varepsilon$ and then using these spectral identities yields
\[
 \left\|\widetilde f\left(\left|\sum_jA_j^{(\varepsilon)}\right|\right)\right\|_p
 \le c^{\lin}_{p,m,n}
 \left\|\sum_j\widetilde f(|A_j^{(\varepsilon)}|)\right\|_p.
\]
Letting $\varepsilon\to0$, continuity of $\widetilde f$ and finite-dimensional functional calculus, together with \eqref{eq:ftildeequal}, gives the result.
\end{proof}

\begin{proof}[Proof of Theorem~\ref{thm:main}]
First let $1\le p<\infty$. Proposition~\ref{prop:f0finite} and Lemma~\ref{lem:f0positive} give
$c^{\conc}_{p,m,n}\le c^{\lin}_{p,m,n}$, while the reverse inequality follows by taking $f(t)=t$.

Now let $p=\infty$. In finite dimensions, Schatten norms satisfy
\[
 \|X\|_\infty\le\|X\|_p\le n^{1/p}\|X\|_\infty.
\]
Therefore
\begin{equation}\label{eq:clinftycompare}
 n^{-1/p}c^{\lin}_{\infty,m,n}
 \le c^{\lin}_{p,m,n}
 \le n^{1/p}c^{\lin}_{\infty,m,n},
\end{equation}
so $c^{\lin}_{p,m,n}\to c^{\lin}_{\infty,m,n}$. For fixed $f,A_1,\dots,A_m$, the finite-$p$ case gives
\[
 \left\|f\left(\left|\sum_jA_j\right|\right)\right\|_p
 \le c^{\lin}_{p,m,n}
 \left\|\sum_jf(|A_j|)\right\|_p.
\]
Letting $p\to\infty$ gives the operator-norm case of \eqref{eq:mainineq}. Sharpness again follows from $f(t)=t$.
\end{proof}

\section{Consequences and the remaining problem}

The main theorem and the existing linear results yield the following consequences.

\begin{proposition}\label{prop:applications}
For every nonnegative concave function $f$, the following hold:
\begin{enumerate}
\item For $p=1$,
\[
 \left\|f\left(\left|\sum_{j=1}^mA_j\right|\right)\right\|_1
 \le
 \left\|\sum_{j=1}^mf(|A_j|)\right\|_1,
\]
and the constant $1$ is best possible.

\item For $p=2$,
\[
 c^{\conc}_{2,m,n}
 =
 \sqrt{\frac{1+\sqrt{\min\{m,n\}}}{2}}.
\]

\item For $p=\infty$,
\[
 c^{\conc}_{\infty,m,n}=\sqrt{\min\{m,n\}}.
\]

\item As a Frobenius constant uniform over all dimensions,
\[
 \sup_{n\ge1}c^{\conc}_{2,m,n}
 =
 \sqrt{\frac{1+\sqrt m}{2}}.
\]

\item For $m=2$ and $2\le p<\infty$, let $x_p>1$ satisfy
\[
 x_p^p=2x_p+1.
\]
Then
\[
 \sup_{n\ge1}c^{\conc}_{p,2,n}
 =
 C_p:=
 \frac{\sqrt{x_p(x_p+1)}}{(x_p^p+1)^{1/p}}.
\]
\end{enumerate}
\end{proposition}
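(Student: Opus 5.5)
Each of the five items reduces to Theorem~\ref{thm:main}: that theorem gives $c^{\conc}_{p,m,n}=c^{\lin}_{p,m,n}$ for every fixed $p,m,n$. So the task is to identify the linear constant in each regime, using known linear results where possible, and then take suprema over $n$ where the statement requires it.

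For item~1 ($p=1$), I will show $c^{\lin}_{1,m,n}=1$ directly. I write the polar decomposition $\sum_jA_j=W|\sum_jA_j|$ and $A_j=U_jP_j$. Then
\[
\Bigl\|\sum_jA_j\Bigr\|_1=\Tr W^*\sum_jA_j=\sum_j\Tr\bigl(P_j^{1/2}W^*U_jP_j^{1/2}\bigr).
\]
By Cauchy--Schwarz each summand has modulus at most $\Tr P_j$, because $\|W^*U_j\|_\infty\le1$. Summing gives at most $\Tr\sum_jP_j=\|\sum_jP_j\|_1$. Equality is attained with $m=1$-type data, for example all $A_j$ equal and positive, so $c^{\lin}_{1,m,n}=1$.

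For item~3 ($p=\infty$), I need $c^{\lin}_{\infty,m,n}=\sqrt{\min\{m,n\}}$. For the upper bound by $\sqrt m$, take unit vectors $x,y$ and estimate
\[
\Bigl|\langle y,\sum_jA_jx\rangle\Bigr|\le\sum_j\|P_j^{1/2}x\|\,\|P_j^{1/2}U_j^*y\|.
\]
By Cauchy--Schwarz this is at most
\[
\Bigl(\langle x,\sum_jP_jx\rangle\Bigr)^{1/2}\Bigl(\sum_j\langle y,U_jP_jU_j^*y\rangle\Bigr)^{1/2}\le\Bigl\|\sum_jP_j\Bigr\|_\infty^{1/2}\Bigl(m\max_j\|P_j\|_\infty\Bigr)^{1/2},
\]
and $\max_j\|P_j\|_\infty\le\|\sum_jP_j\|_\infty$. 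The bound by $\sqrt n$ comes from $\|\sum_jA_j\|_\infty\le\|\sum_jA_j\|_2$ together with the Frobenius bound. For sharpness I will use the rank-one matrices $A_j=e_1e_j^*$ for $j\le k=\min\{m,n\}$. Their sum has norm $\sqrt k$, while $\sum_j|A_j|=\sum_je_je_j^*$ has norm $1$.

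Items~2 and~4 are where I expect the main obstacle: the fixed-dimensional Frobenius value $\sqrt{(1+\sqrt{\min\{m,n\}})/2}$ must be extracted from the literature, namely Tang--Zhang~\cite{TangZhang} and Teng Zhang~\cite{TengZhang2026}. The step to check is that the Tang--Zhang argument really only uses $\rank\le n$, so that $\sqrt m$ may be replaced by $\sqrt{\min\{m,n\}}$. The heuristic is that at most $n$ of the $A_j$ can contribute independent directions; equivalently, in the proof's Cauchy--Schwarz step the number $m$ counts directions and can be capped at $n$. Extremizers will be taken in dimension $n\ge m$. Once item~2 holds, item~4 is immediate, since the expression is nondecreasing in $n$ and equals $\sqrt{(1+\sqrt m)/2}$ for $n\ge m$.

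For item~5, I will invoke Qiu~\cite{Qiu}, who gives the sharp two-matrix dimension-free constants $\sup_nc^{\lin}_{p,2,n}=C_p$ for $2\le p<\infty$. Transferring through Theorem~\ref{thm:main} for each $n$ and then taking the supremum over $n$ finishes the item.
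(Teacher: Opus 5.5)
Your reduction is the paper's: each item follows from Theorem~\ref{thm:main} once the linear constant is known. Item~1 is the Schatten-$1$ triangle inequality, and items~2, 4 and~5 rest on Teng Zhang, Tang--Zhang and Qiu. The paper gets item~3 by citing Teng Zhang~\cite{TengZhang2026}, whereas you try to prove $c^{\lin}_{\infty,m,n}=\sqrt{\min\{m,n\}}$ directly. Your $\sqrt m$ estimate is correct, and so is the example $A_j=e_1e_j^*$.

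The $\sqrt n$ branch, however, does not work as written, and it is the only branch that matters when $n<m$. Going through $\|\cdot\|_\infty\le\|\cdot\|_2$ and the Frobenius Lee bound gives only
\[
\Bigl\|\sum_jA_j\Bigr\|_\infty\le\Bigl\|\sum_jA_j\Bigr\|_2\le\sqrt{\tfrac{1+\sqrt{\min\{m,n\}}}{2}}\,\Bigl\|\sum_jP_j\Bigr\|_2\le\sqrt{\tfrac{n(1+\sqrt{\min\{m,n\}})}{2}}\,\Bigl\|\sum_jP_j\Bigr\|_\infty .
\]
This exceeds $\sqrt n$ whenever $\min\{m,n\}\ge2$; for $n=2$, $m=3$ it is $\sqrt{1+\sqrt2}>\sqrt2$. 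Because the Frobenius constant is larger than $1$, this chain cannot produce $\sqrt n$.

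The repair is already inside your Cauchy--Schwarz estimate. Bound the second factor by a trace rather than by $m\max_j\|P_j\|_\infty$:
\[
\sum_j\langle y,U_jP_jU_j^*y\rangle\le\Tr\sum_jU_jP_jU_j^*=\Tr\sum_jP_j\le n\Bigl\|\sum_jP_j\Bigr\|_\infty ,
\]
using $U_j^*U_jP_j=P_j$. Taking the smaller of the two bounds gives $\sqrt{\min\{m,n\}}$. Item~3 then has a short self-contained proof, which is more than the paper supplies.

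For items~2 and~4, do not present the heuristic about ``independent directions'' as an argument. For $n<m$, replacing $m$ by $\min\{m,n\}$ in the Frobenius upper bound is a genuine theorem, Teng Zhang's fixed-dimensional result, and you should simply cite it as the paper does.

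The matching lower bound for $n<m$ also needs an extremizer with $n$ nonzero summands in dimension $n$ (pad with $A_j=0$), not one in dimension $n\ge m$. One choice is $A_j=e_1w_j^*$ for $j\le n$, with unit vectors $w_j\in\mathbb C^n$ whose pairwise inner products all equal $1/(1+\sqrt n)$. This gives the ratio $\sqrt{(1+\sqrt n)/2}$. With these adjustments, item~4 follows from item~2 by monotonicity in $n$, as you say.
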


\begin{proof}
The first statement follows from the Schatten $1$ triangle inequality, which gives $c^{\lin}_{1,m,n}=1$. The second and third statements follow from Teng Zhang's fixed-dimensional linear results~\cite{TengZhang2026}; the fourth also follows by combining the main theorem with the dimension-free sharp Frobenius constant of Tang--Zhang~\cite{TangZhang}; and the fifth follows from Qiu's sharp two-matrix result~\cite{Qiu}. Applying Theorem~\ref{thm:main} gives each assertion.
\end{proof}

The candidate formula proposed by Tang--Zhang~\cite{TangZhang} for $1<p<2$ is now known to fail throughout the entire interval. Zeng--Liu--Ratnavelu~\cite{ZLR} gave an explicit low-dimensional counterexample at $p=3/2$, and Qiu~\cite{Qiu} constructed counterexamples for two matrices for every $1<p<2$. Pang~\cite{Pang2026} further proved that the candidate formula fails for every $m\ge2$ and every $1<p<2$. The correct sharp linear constants in this range remain to be determined.

\begin{problem}
For $1<p<2$, determine the fixed-dimensional best constant $c^{\lin}_{p,m,n}$ or the all-dimensional constant
\[
 c_p^{\lin}(m):=\sup_{n\ge1}c^{\lin}_{p,m,n},
\]
and study the extremal cases.
\end{problem}

\section*{Acknowledgments}
The author thanks Jean-Christophe Bourin for helpful correspondence and for drawing attention to his earlier normal-matrix subadditivity result.

OpenAI's GPT-5.6 Sol assisted in developing a key construction in the noncommutative reassembly step, as well as in language editing, exposition, and consistency checks of the manuscript. This construction made it possible to extend an earlier quadratic-form argument, which was valid only for $p=2$, to the full range $p>1$. The author independently verified the mathematical arguments, references, and final text and takes full responsibility for the content of the paper.

\end{document}